\newtheorem{theorem}{Theorem}
\newtheorem{lemma}[theorem]{Lemma}
\let\oldexercise\exercise
\renewcommand{\exercise}{\oldexercise\normalfont}
\let\oldexample\example
\renewcommand{\example}{\oldexample\normalfont}
\newcommand{\R}{\mathbb{R}}
\title{Triple Crossing Number and Double Crossing Braid Index}
\date{May 8, 2018}
\author{Daishiro Nishida}
\begin{document}

\begin{abstract}
Traditionally, knot theorists have considered projections of knots where there are two strands meeting at every crossing. A triple crossing is a crossing where three strands meet at a single point, such that each strand bisects the crossing. In this paper we find a relationship between the triple crossing number and the double crossing braid index, namely $\beta_2(L) \le c_3(L) + 1$. We find an infinite family of knots that achieve equality, which allows us to determine both the double crossing braid index and the triple crossing number of these knots.
\end{abstract}

\maketitle


\section{Introduction}
\label{sec:intro}

In traditional knot theory, knots are drawn in a projection where there are two strands passing over each other at every crossing. An \emph{$n$-crossing} is a crossing where there are $n$ strands meeting at one point, with each strand bisecting the crossing. We call this crossing a \emph{multi-crossing} if $n>2$, and we call the traditional type ($n=2$) a \emph{double crossing}. When $n = 3$ the strands are labeled top, middle and bottom, and when $n \ge 4$ we assign the levels $1, \dots n$ from the top.

In~\cite{triple crossing}, Adams proved that every link has an $n$-crossing projection for all $n \ge 3$. This fact allows us to generalize notions in traditional knot theory to their multi-crossing versions. For example, every link has a \emph{crossing number} $c(L)$, which is the minimum number of crossings in any double crossing projection of the link $L$. We can define the \emph{multi-crossing number} $c_n(L)$ to be the minimum number of crossings in any $n$-crossing projection of the link $L$. This gives us an infinite spectrum of crossing numbers that can be explored.

Let $\beta_2(L)$ denote the braid index of $L$. No et al. give a simple proof of the inequality $\beta_2(L) \le \frac{1}{2}c_2(L) + 1$ using a bisected vertex leveling of a planar graph~\cite{braid index bound}. We generalize this proof to obtain a bound on the triple crossing number in terms of the double crossing braid index. Specifically, we prove the following bound.

\begin{theorem}
\label{thm:braidIndexBound}
Let $L$ be a non-split link. Then
\[
\beta_2(L) \le c_3(L) + 1.
\]
\end{theorem}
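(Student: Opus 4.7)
The plan is to generalize the bisected vertex leveling argument of No et al.\ from $4$-valent projection graphs to $6$-valent ones. Starting from a triple crossing projection $D$ of $L$ realizing $c_3(L)$ crossings, form the planar graph $G$ whose vertices are the triple crossings (each of degree $6$) and whose edges are the arcs of $D$. Since $L$ is non-split, $G$ may be taken connected; writing $n = c_3(L)$, we have $|V(G)| = n$, $|E(G)| = 3n$, and by Euler's formula $|F(G)| = 2n+2$.

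The next step is to produce a \emph{bisected vertex leveling} of $G$, that is, an assignment of distinct heights to the vertices so that at each vertex three incident edge-ends lie above and three lie below. This is the direct $6$-valent analog of the leveling constructed by No et al., and I expect the same planar-graph-theoretic reasoning to carry over. Given such a leveling, isotope $D$ so that each vertex sits at its assigned height and each edge is monotone in height. At every vertex three strands enter from below and three exit above, so the picture becomes a stack of ``triple-crossing braid generators.'' Perturbing each triple crossing into three nearby double crossings on the same three strands then yields an honest braid $\beta$ whose closure is $L$, on some number $k$ of strands, so that $\beta_2(L) \le k$.

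It remains to show $k \le c_3(L) + 1$. Because the strand count does not change when passing a bisected vertex ($3$ in, $3$ out), $k$ is controlled entirely by the cups and caps of the leveling. In the double-crossing case, No et al.\ obtain $k \le c_2(L)/2 + 1 = F/2$; the analog here is $k \le n+1 = F/2$, which matches Euler's formula in both settings. The plan is to adapt the same counting argument to the $6$-valent case, choosing the leveling (and the planar realization of $G$) so as to minimize the number of local extrema and hence the maximum horizontal width.

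\textbf{Main obstacle.} The main obstacle is this final strand-count estimate: tracking constants carefully enough that a bisected leveling of a connected $6$-valent planar graph always yields a braid on at most $n+1$ strands. Establishing existence of the bisected leveling itself is a combinatorial adaptation of the $4$-valent argument and should not pose serious difficulty, and the perturbation converting triple crossings to triples of double crossings is routine; the technical heart lies in verifying that the extra slack in the $6$-valent count still compresses to the clean $F/2$ bound.
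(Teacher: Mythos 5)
Your plan has a genuine gap, and its central structural assumption is false. A bisected vertex leveling does not, and cannot, place three edge-ends above and three below every vertex: edges are required to be monotone in height, so the highest vertex necessarily has all six of its edge-ends going down (in the paper's notation, the top strip is a $T_p^0$, the bottom strip a $T_0^q$, and intermediate strips can be $T_1^5$, $T_2^4$, or $T_3^3$). Consequently the leveled diagram is not ``a stack of triple-crossing braid generators,'' and it cannot be read vertically as a braid. A second, independent obstruction is orientation: a leveling is a statement about the projection graph, not about the link's orientation, so even at a genuine $3$-in/$3$-out vertex a strand may enter and exit on the same side, producing a cup or cap rather than a monotone braid strand. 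This is precisely why the paper's proof does not read the leveling vertically. Instead it straightens each strip into horizontal, diagonal, and vertical segments, chooses a natural orientation by checkerboard coloring, and views the diagram as a braid running \emph{left to right}; every left-directed non-vertical segment is pulled around the back of the projection (a ``switch move''), and the braid index is bounded by the number of such segments, which is at most half of the $2c_3(L)+2$ non-vertical segments, i.e.\ $c_3(L)+1$.

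What you label the ``technical heart'' and defer is indeed the heart, and it is not a mere constant-tracking exercise: the switch move only applies to a strand that is the top or the bottom strand at its crossing, so a left-directed \emph{middle} strand of a triple crossing cannot simply be pulled around the back. The paper handles this by a case analysis over strip types ($T_3^3$, $T_2^4$, $T_1^5$, $T_0^6$, and strips with monogons), decomposing each triple crossing into double crossings in a carefully chosen way so that each offending segment ends up involved in exactly one double crossing, without increasing the count of segments pulled around the back. You also skip a necessary preprocessing step: the leveling theorem of No et al.\ requires a connected plane graph with no monogons and no cut vertices, and a minimal triple-crossing projection can have both; the paper proves a lemma showing that cut vertices can be eliminated and that every remaining monogon may be assumed to have its two ends at the top and bottom levels, after which monogons are stripped off, the leveling is applied, and the monogons are reattached. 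Without these ingredients, neither the existence of a usable leveling nor the bound $\beta_2(L) \le c_3(L)+1$ follows from your outline.
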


In Section~\ref{sec:proof} we present a proof of this result. In Section~\ref{sec:example} we describe an infinite family of knots that achieve equality, and determine the double crossing braid index and the triple crossing number of these knots.

Thanks to Colin Adams for his suggestions on the topic and the approach.

\section{Proof of Theorem~\ref{thm:braidIndexBound}}
\label{sec:proof}

We consider planar graphs embedded in $\R^2$. A bisected vertex leveling of a graph $G$ with $v$ vertices is an rearrangement of the graph between two horizontal lines $y=0$ and $y=v$ such that the following is true:

\begin{enumerate}[topsep=0pt,itemsep=-1ex,partopsep=0ex,parsep=1ex]

\item Each vertex lies on a horizontal strip between $y=k$ and $y=k+1$, for $k=0,1,\dots,v-1$ in which no other vertex lies.
\item Each edge of $G$ has no maxima and minima as critical points of the height function given by the y-coordinate, except its endpoints (vertices).
\item Each line $y = k$, for $k=1,2,\dots,v-1$, cuts $G$ into two pieces, each of which is connected.

\end{enumerate}

No et al. prove that every connected plane graph, which has no monogons or cut vertices, has a bisected vertex leveling~\cite{braid index bound}. Here, a \emph{monogon} is an edge whose two endpoints are the same vertex, and a \emph{cut vertex} is a vertex which, when removed from the graph, splits the graph into two or more connected components, each of which has at least one vertex. In particular, with this definition a monogon is not considered to be a cut vertex. We use this bisected vertex leveling of the projection of a link to obtain a double crossing braid.

We first present a lemma that will lead to the proof of Theorem~\ref{thm:braidIndexBound}.

\begin{lemma}
\label{lem:cutVertices}
Given a triple crossing projection of a link, there is an isotopy so that we obtain a projection with the same number of crossings, such that the following is true.
\begin{itemize}
\item[(i)] There are no cut vertices.
\item[(ii)] For each monogon, the levels of its two ends are top and bottom.
\end{itemize}
\end{lemma}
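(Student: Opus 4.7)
The strategy is to adapt the No et al. argument for double crossings to the triple-crossing setting, using local ambient isotopies that preserve the number of crossings. Note that, unlike the double-crossing case where the analogous lemma eliminates all monogons entirely, here we only require that each monogon have its two ends at the top and bottom levels, so monogons of the ``right type'' are permitted. This suggests that the two conditions should be achieved in succession, with (ii) performed after (i) by moves local enough not to disturb it.

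For (i), I would induct on the number of cut vertices in the projection graph $G$. Suppose $v$ is a cut vertex. Since $v$ is a triple crossing of degree six, planarity forces the six edge-ends at $v$ to partition, in cyclic order around $v$, into maximal consecutive arcs, one for each component of $G \setminus v$ into which that arc's edges run. Choose a component $A$ of $G \setminus v$ whose attaching arc has the fewest edges. The plan is to apply a local isotopy---a triple-crossing analogue of a flype---that rotates $A$ around $v$, so that its attaching arc merges with that of a neighboring component. This strictly decreases the number of components of $G \setminus v$ while keeping $v$ a triple crossing and preserving the total crossing count. Iterating dissolves $v$ as a cut vertex; repeating across all cut vertices establishes (i).

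For (ii), once (i) has been arranged, consider a monogon $m$ at a triple crossing $v$ whose two endpoints occupy edge-end positions at $v$ with levels other than $\{\mathrm{top}, \mathrm{bottom}\}$. The loop $m$ bounds a disk $D$ in the plane. The plan is to apply a local ambient isotopy that lifts $D$ over or pushes it under the other two strands at $v$, cyclically shifting the pair of edge-end positions occupied by $m$'s endpoints among the six positions at $v$ until they land at one top position and one bottom position. Because the move is confined to a neighborhood of $v$ together with $D$, it preserves the total number of triple crossings and does not affect the connectivity of $G \setminus v'$ for any other vertex $v'$, so in particular no new cut vertex is created. Applying this procedure to each bad monogon in turn establishes (ii) while keeping (i).

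The main obstacle is the precise construction and verification of these two local moves. The triple-crossing flype is more delicate than its classical double-crossing counterpart because the three strands carry three distinct level labels: a naive rigid rotation of $A$ would permute these labels and change the link, so the move must be combined with a consistent re-routing of strand heights so that the crossing at $v$ is again a valid triple crossing in standard form. The monogon-level shift faces an analogous difficulty, as the slide of $D$ across $v$ must respect the over/under data. Once both moves are shown to be genuine ambient isotopies that exactly preserve the crossing count, the two inductions---one over cut vertices, one over bad monogons---close the argument.
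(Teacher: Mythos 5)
There is a genuine gap: your proposal is a plan rather than a proof, and the part you defer is the entire mathematical content of the lemma. Both of your key steps rest on moves (a ``triple-crossing analogue of a flype'' for cut vertices, and a level-shifting slide of the monogon disk $D$ across $v$) that you never construct, and you say so yourself: the final paragraph concedes that ``the main obstacle is the precise construction and verification of these two local moves.'' This obstacle is not a technicality. As you correctly observe, a rigid rotation of a component around a triple crossing permutes the top/middle/bottom labels and changes the link, and sliding $D$ across $v$ naively is a crossing change, not an isotopy. Until those moves are exhibited and shown to be ambient isotopies preserving the crossing count, nothing has been proved; the two inductions are scaffolding around an empty core.

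The paper's proof shows why most of this machinery is unnecessary: it exploits minimality of the projection (the lemma is applied to a minimal triple-crossing diagram), which kills the hard cases outright instead of fixing them by moves. For (ii), no move is needed at all: a monogon whose two ends are at adjacent levels (top--middle or middle--bottom) lies entirely above, respectively below, the remaining strand, so it can be undone, deleting the crossing and contradicting minimality; hence every monogon is automatically top--bottom. For (i), parity forces the components at a cut vertex to attach with either $2{+}2{+}2$ or $4{+}2$ edges. In the $2{+}2{+}2$ case the top strand can be pulled off and the leftover double crossing removed, again reducing the crossing count, so this case cannot occur (or is harmless). The only case requiring an actual move is $4{+}2$: twist the two-edge component so the triple crossing decomposes into two double crossings, slide the component past the third strand, and recompose the triple crossing; this trades the cut vertex for a monogon, which part (ii) then guarantees is of top--bottom type. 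Your approach also misses the case analysis ($2{+}2{+}2$ versus $4{+}2$) that makes the single needed move easy to describe; in particular, your uniform ``merge the smallest component with a neighbor'' induction would attempt a move in the $2{+}2{+}2$ case where the correct resolution is that the crossing can simply be removed. If you want to prove the lemma exactly as stated, without a minimality hypothesis, your level-shifting idea for (ii) can in fact be salvaged (lift the monogon off the third strand entirely, then re-thread it with one arm over and one arm under that strand, recreating a triple crossing with top--bottom ends), but that construction must be spelled out, and your proposal does not do so.
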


\begin{proof}
For part (i), suppose a cut vertex appears in a projection. This vertex has degree 6, and each component created by removing this vertex must have an even number of edges entering it. There are two possibilities: we get three components, each with two edges entering it, or we get two components, one with four edges and the other with two edges.

In the first case, the top strand of the crossing may be moved away. Then we are left with a double crossing that is easily removed (Fig.~\ref{fig:bibCut2-2-2}).

\begin{figure}[ht]
	\centering
	\includegraphics[scale=0.25]{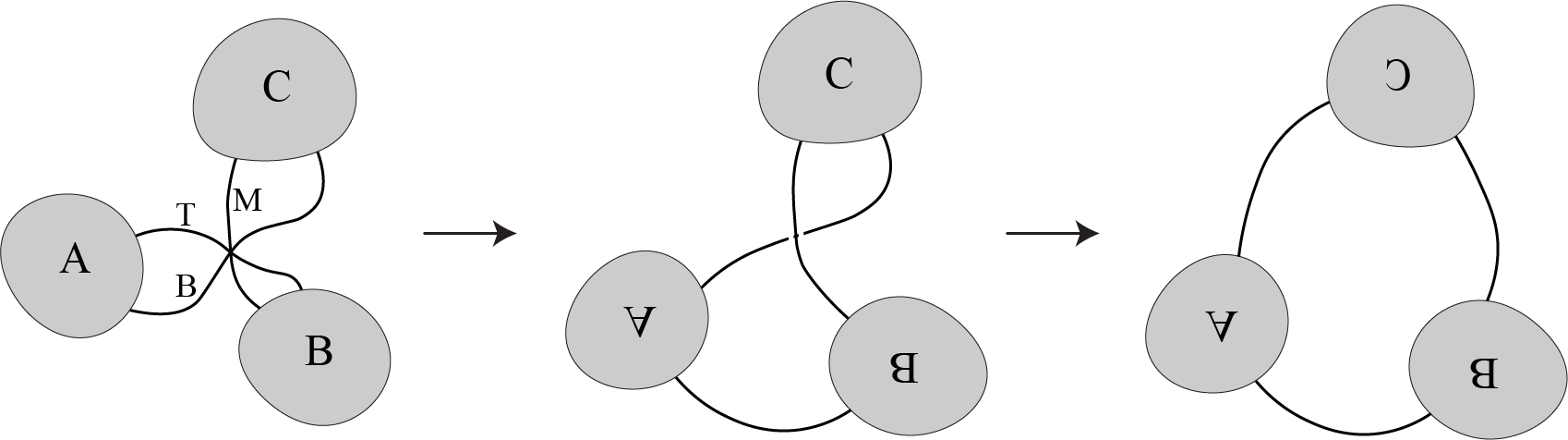}
	\caption{A cut vertex with three components, each with two edges entering it. We can move the top strand away, and then we are left with a double crossing that we can reduce.}
	\label{fig:bibCut2-2-2}
\end{figure}

In the second case, we shift a component as below (Fig.~\ref{fig:bibCut4-2}), so the vertex is no longer a cut vertex, in the sense that the removal of the vertex and the edges adjacent to it will not result in two or more connected components. Note that we now have a monogon instead, which we deal with next.

\begin{figure}[ht]
	\centering
	\includegraphics[scale=0.25]{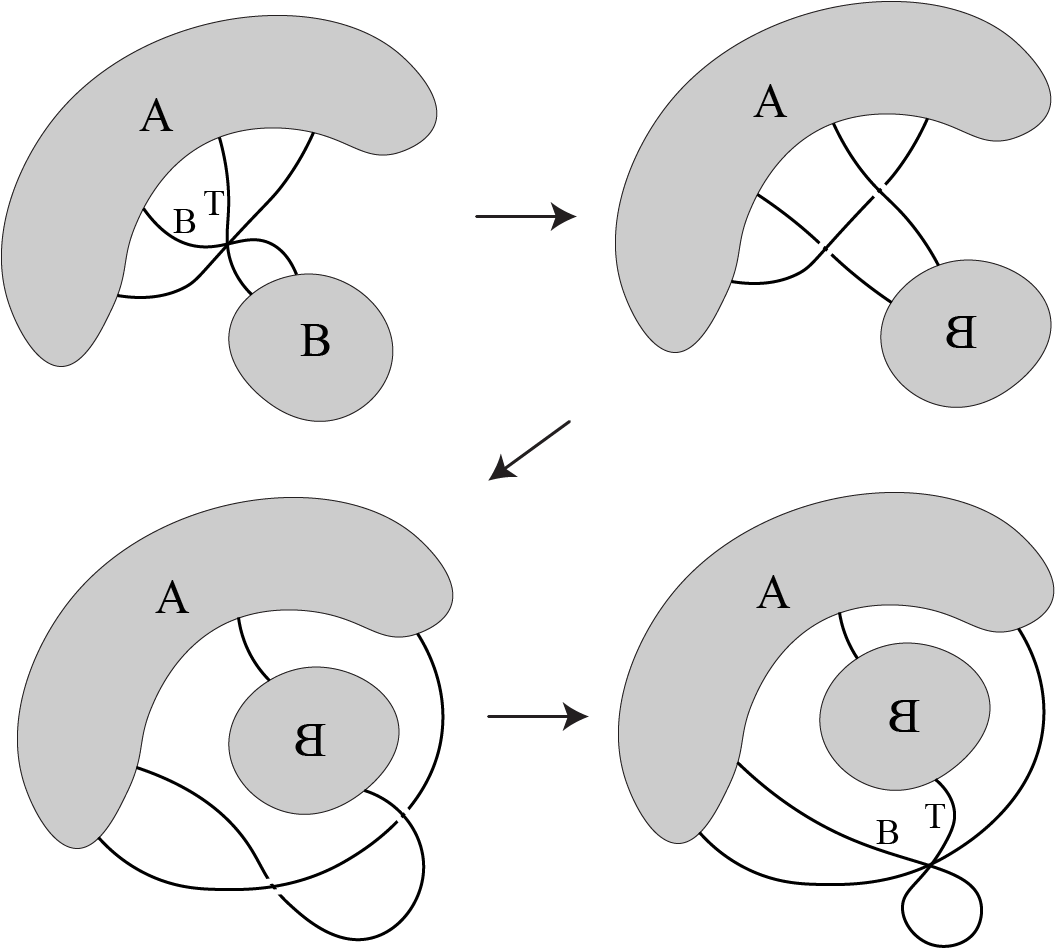}
	\caption{A cut vertex with two components, where one component has four edges entering it and the other has two. We twist the second component to turn the triple crossing into two double crossings. Then we can move this component past the third strand, and recreate the triple crossing. We are now left with a monogon.}
	\label{fig:bibCut4-2}
\end{figure}

For part (ii), take a monogon in a triple crossing projection. Consider the levels of the two ends of the monogon. If the two levels are top and middle, or middle and bottom, we can undo the monogon and remove the crossing, contradicting that the projection was minimal. Therefore the levels must be top and bottom. Note that each vertex can have at most one monogon, for otherwise one of the monogons can be undone.
\end{proof}

Now we are ready to prove Theorem~\ref{thm:braidIndexBound}.

\begin{proof}[Proof of Theorem~\ref{thm:braidIndexBound}]
Consider a minimal triple crossing projection of a non-split link $L$, with $c$ crossings. If we ignore the crossing information, we can consider it to be a planar connected graph $G$, potentially with some monogons and cut vertices. By Lemma~\ref{lem:cutVertices}, we can assume $G$ has no cut vertices, and only monogons where the levels of the two ends are top and bottom.

We then consider a new graph $G'$, which is the graph $G$ minus all monogons. Note that if we can perform an isotopy of $G'$, we can also do the same isotopy on $G$ by first removing the monogons, performing the isotopy, then putting the monogons back where they were. Now $G'$ is a connected graph with no monogons and cut vertices, so we can apply the result of No et al. to obtain a bisected vertex leveling of the graph.

Consider the graph $G$, obtained by adding the monogons back onto $G'$ after a bisected vertex leveling. We know $G$ has $c$ vertices, so $G$ is split into $c$ horizontal strips, each containing a single vertex. Each strip lies between two horizontal lines, and consists of four kinds of arcs: those going from the top line to the vertex, those going from the vertex to the bottom line, those going from the top line to the bottom line, and monogons whose endpoints are both at the vertex (Fig.~\ref{fig:bibStrip}).

\begin{figure}[ht]
	\centering
	\includegraphics[scale=0.3]{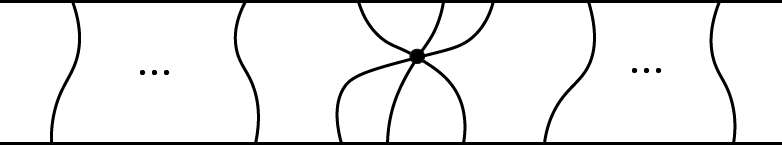}
	\caption{A horizontal strip consists of arcs between the top line, the bottom line, and the vertex.}
	\label{fig:bibStrip}
\end{figure}

Ignore the arcs going from the top line to the bottom line. Then, since each vertex has degree 6, and each vertex has at most one monogon, the strip must look like one of the following or their vertical reflections (Fig.~\ref{fig:bibGraphs}).

\begin{figure}[ht]
	\centering
	\includegraphics[scale=0.3]{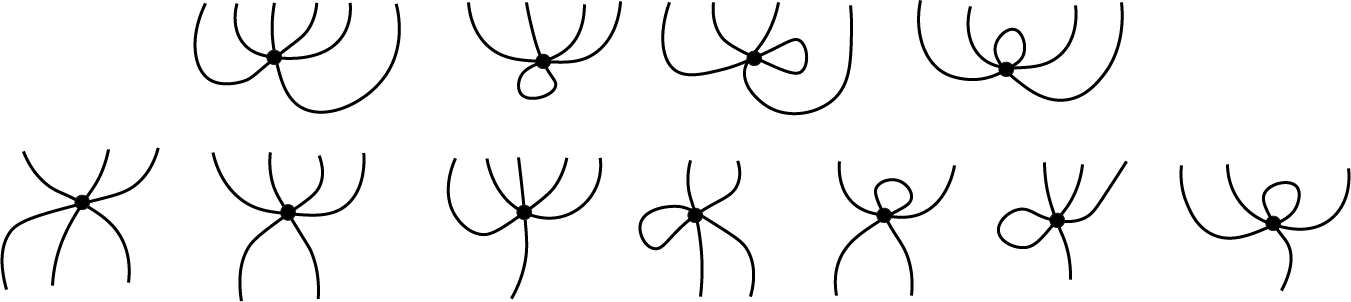}
	\caption{What the arcs of a strip could look like (once the arcs going from the top line to the bottom line are ignored). The top row shows what the strip between $y=0$ and $y=1$ could look like. The strip between $y=v-1$ and $y=v$ will be a vertical reflection of one of the graphs in the top row. All other strips will look like a graph in the bottom row, or its vertical reflection.}
	\label{fig:bibGraphs}
\end{figure}

As in \cite{braid index bound}, we denote by $T_p^q$ a horizontal strip with $p$ arcs going from the bottom line to the vertex, and $q$ arcs going from the vertex to the top line. By the conditions of a bisected vertex leveling, the top strip will be a $T_p^0$ strip for some $p \not = 0$, and the bottom strip will be a $T_0^q$ strip for some $q \not = 0$. The conditions also imply that every strip other than the top and the bottom strips must be a $T_p^q$ for $p,q$ nonzero.

Now restore the top/middle/bottom information at each crossing. We then replace each horizontal strip with a combination of line segments, each with slope 0 (horizontal), 1 (diagonal), or infinity (vertical). Strips with no monogons will be replaced as follows (Fig.~\ref{fig:bibNoLoop}).

\begin{figure}[ht]
	\centering
	\includegraphics[scale=0.3]{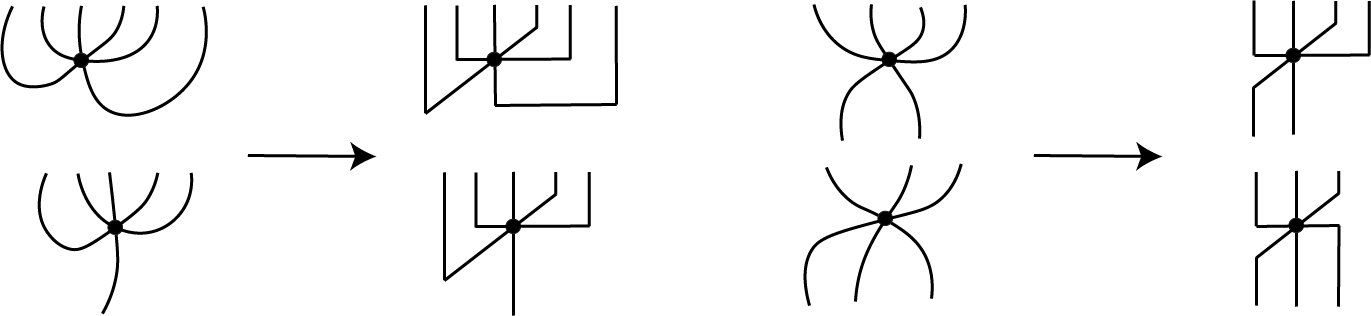}
	\caption{Replacing strips with a combination of horizontal, diagonal, and vertical line segments.}
	\label{fig:bibNoLoop}
\end{figure}

Strips whose vertex has a monogon will first be replaced with similar pictures, but we can reduce them further by undoing the monogons, and rearranging appropriately (Fig.~\ref{fig:bibLoops}). Then each strip can be turned into a combination of horizontal and vertical line segments. Observe that every crossing in this strip is a double crossing and they lie on different levels; in other words, they can be placed on two or three substrips, each of which contains one double crossing.

\begin{figure}[ht]
	\centering
	\includegraphics[scale=0.25]{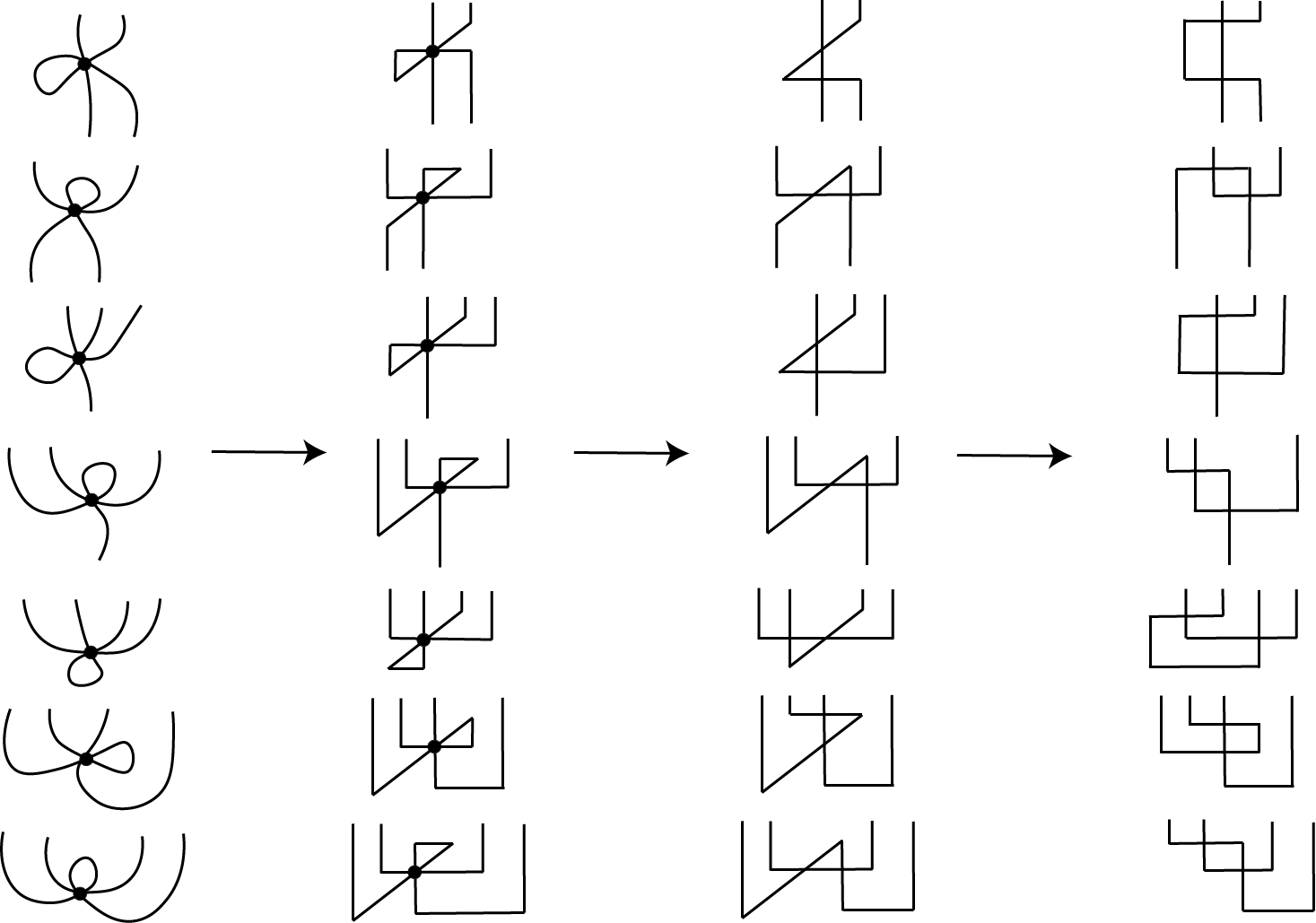}
	\caption{For strips with monogons, we can undo the monogon and rearrange the strings so we are left with horizontal and vertical line segments, where every crossing is a double crossing and lie on different levels.}
	\label{fig:bibLoops}
\end{figure}

Observe that all strips other than $T_0^6$ or $T_0^4$ have exactly two non-vertical line segments, and the strips $T_0^6$ and $T_0^4$ have exactly three non-vertical line segments. This means that we have $2c+2$ non-vertical line segments in the whole graph; 3 each from the top and the bottom strips, and 2 each from the remaining strips.

Choose a natural orientation on the diagram, which is an orientation where at every crossing, the strands alternate between pointing in and out as we go around the crossing. This can be done by checkerboard coloring the complementary regions, and orienting the boundaries of the black regions counterclockwise~\cite{natural orientation}. Each non-vertical line segment has two possibilities, say left directed or right directed. Without loss of generality, we assume that the number of left directed non-vertical line segments are less than or equal to the number of right directed non-vertical line segments.

Our goal is to view this diagram as a braid going from left to right. We describe a procedure to replace certain left directed non-vertical line segments with right directed non-vertical line segments. Suppose we have a left directed non-vertical line segment, which is involved in exactly one crossing as the top (resp. bottom) strand. We can replace it with two external horizontal rays, starting at its two endpoints, that runs through over (resp. under) other line segments (Fig.~\ref{fig:bibSwitch}). We call this a switch move. Note that this is equivalent to pulling the string around the back of the projection and making it right directed. Hence this move is an isotopy that preserves the represented link, once we connect the external rays appropriately. Note also that if the crossing is a double crossing, then both strands of the crossing are either the ``top'' or the ``bottom'', so we can always apply the switch move to double crossings.

\begin{figure}[ht]
	\centering
	\includegraphics[scale=0.3]{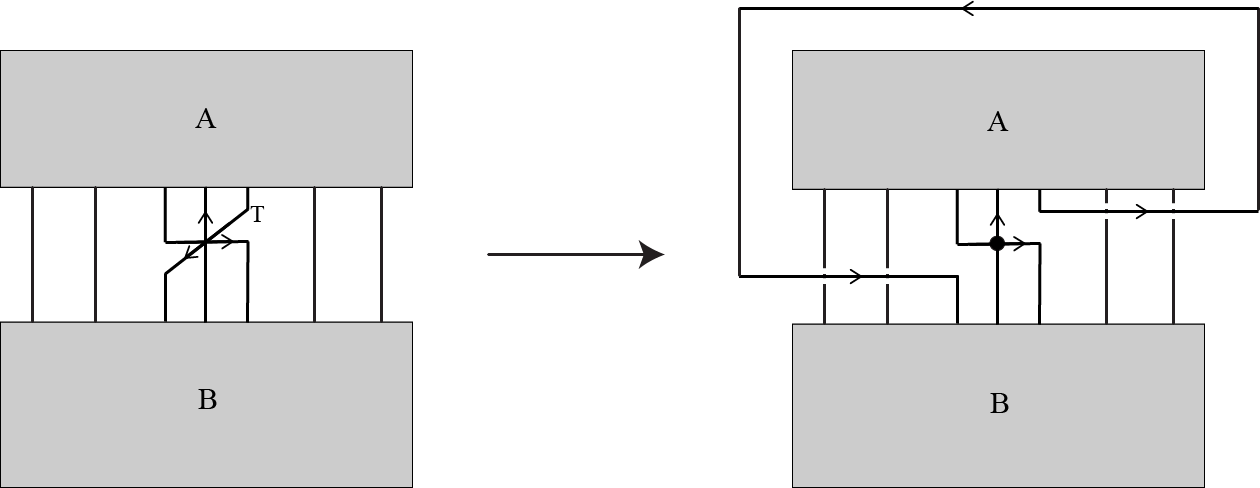}
	\caption{Replacing a left directed segment with two external rays. This can be thought of as pulling a string around the back to make it right directed.}
	\label{fig:bibSwitch}
\end{figure}

Observe that in a strip with a monogon, the crossings are all double crossings that lie on substrips with no other crossings. Hence every line segment is involved in exactly one double crossing, so we can always do the switch move.

For each strip with no monogons, we describe a move so that we no longer have left directed line segments, and the number of additional right directed line segments that goes around the back is at most the number of left directed line segments we started with. We know that each strip has exactly one triple crossing. If the left directed segment is either the top strand or the bottom strand, then we can do the switch move. Hence we consider the cases when we have a middle strand that is left directed. Note that since the orientation is natural, there are two possible choices of orientation to consider for each strip.

If we have a $T_3^3$ strip, we have exactly one left directed segment. If this segment is the middle strand, then we can make this segment vertical (Fig.~\ref{fig:bib3-3}). Then we may introduce a new segment that is left directed, but since this is not the middle strand, this can be pull around to be made right directed.

\begin{figure}[ht]
	\centering
	\includegraphics[scale=0.33]{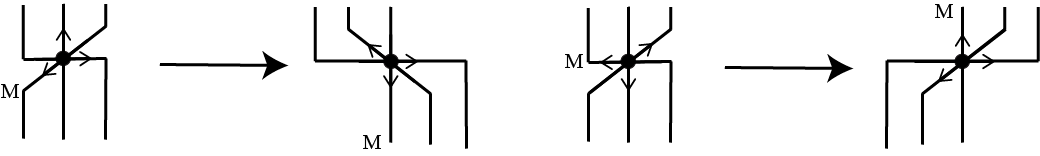}
	\caption{In a $T_3^3$ strip, we can turn any left directed middle strand into a vertical line segment. We can then apply the switch move to any other left directed line segments.}
	\label{fig:bib3-3}
\end{figure}

If we have a $T_2^4$ strip, again we have exactly one left directed segment. If this is the segment that goes from the top line to the bottom line, we can make it vertical as before (Fig.~\ref{fig:bib4-2}). If this is the segment that goes from the top line back to the top line, then we can decompose the triple crossing. We are left with one left directed segment involved in a double crossing, which we can make right directed.

\begin{figure}[ht]
	\centering
	\includegraphics[scale=0.33]{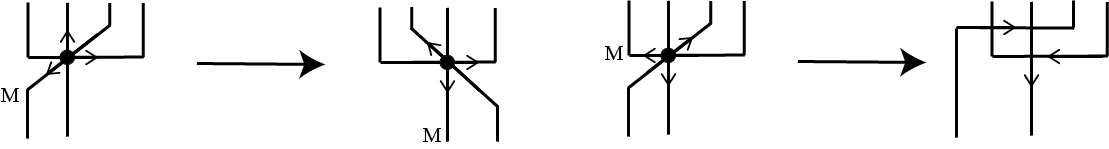}
	\caption{In a $T_2^4$ strip, we can either make the left directed middle strand vertical, or we can decompose the triple crossing so that the left directed line segment is only involved in a double crossing.}
	\label{fig:bib4-2}
\end{figure}

If we have a $T_1^5$ strip, we can apply one of the following moves depending on the orientation (Fig.~\ref{fig:bib5-1}).

\begin{figure}[ht]
	\centering
	\includegraphics[scale=0.33]{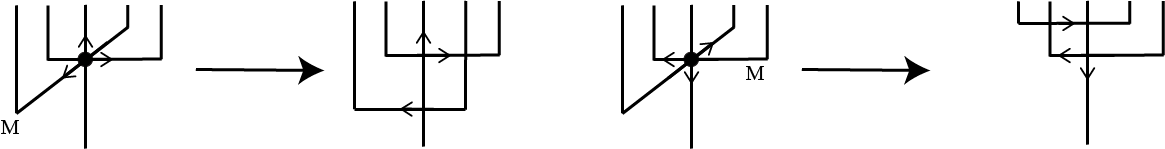}
	\caption{In a $T_1^5$ strip, we can again decompose the triple crossing appropriately so that we are left with one left directed line segment involved in one double crossing.}
	\label{fig:bib5-1}
\end{figure}

If we have a $T_0^6$ strip, then we can either have one or two left directed line segments. In both cases we can obtain a double crossing diagram with the same number of left directed line segment (Fig.~\ref{fig:bib6-0}).

\begin{figure}[!ht]
	\centering
	\includegraphics[scale=0.33]{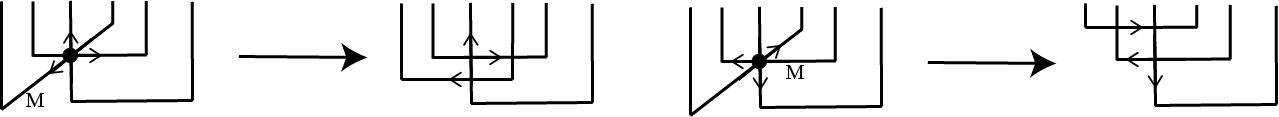}
	\caption{In a $T_0^6$ strip, we may start with one or two left directed line segments, but once the triple crossing is decomposed, we are left with the same number of left directed line segments, each involved in exactly one double crossing.}
	\label{fig:bib6-0}
\end{figure}

Thus we can replace all left directed segments with right directed segments that go around the back. We have replaced them in such a way that the number of additional right directed segments is at most the number of left directed segments that we started with.

This picture can now be viewed as an open braid oriented from left to right. Along with the external rays, connected around the back of the projection, the picture becomes a closed braid which is equivalent to the link we started with. The number of strings in this braid is precisely the number of external rays we added.

Since there were $2c+2$ non-vertical line segments, there were at most $(2c+2)/2 = c+1$ left directed non-vertical line segments. This means that the number of external rays we added was at most $c+1$. Hence we have a braid representation for $L$ with at most $c+1$ strings, which shows that the braid index of $L$ is at most $c+1$.
\end{proof}

\section{Infinite family of knots that achieve equality}
\label{sec:example}

In this section we describe an infinite family of knots for which we can determine their braid indices and their triple crossing numbers using this new bound.

The new bound of $\beta_2(L) \le c_3(L) + 1$ improves on the previous bound of $\beta_2 \le \frac{1}{2} c_2(L)+1$ for links with $c_3(L) < \frac{1}{2} c_2(L)$. Note that for all alternating links, we have $c_3(L) \ge \frac{1}{2} c_2(L)$ as shown in \cite{triple crossing}, so our example must be a non-alternating link. One example of such a link is the following 12-crossing knot with $c_3(L) = 4$ (Fig.~\ref{fig:12knot}).

\begin{figure}[!ht]
	\centering
	\includegraphics[scale=0.2]{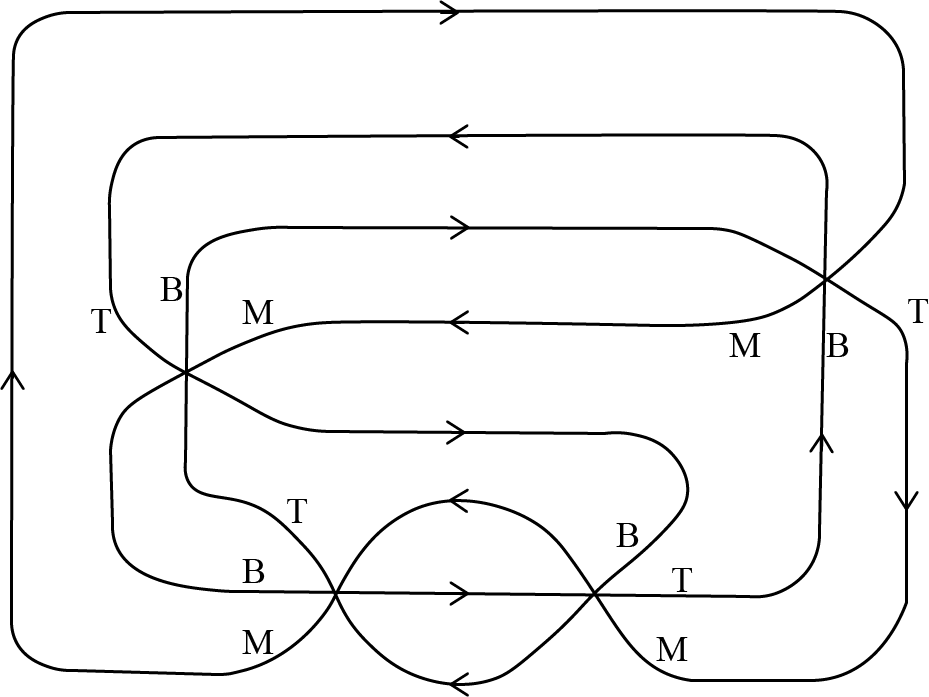}
	\caption{An example of a knot with $c_2(L) = 3c_3(L)$.}
	\label{fig:12knot}
\end{figure}

We consider a family of knots where we repeat the two triple crossings in the $A$ portion indicated in Fig.~\ref{fig:12more}.

\begin{figure}[!ht]
	\centering
	\includegraphics[scale=0.2]{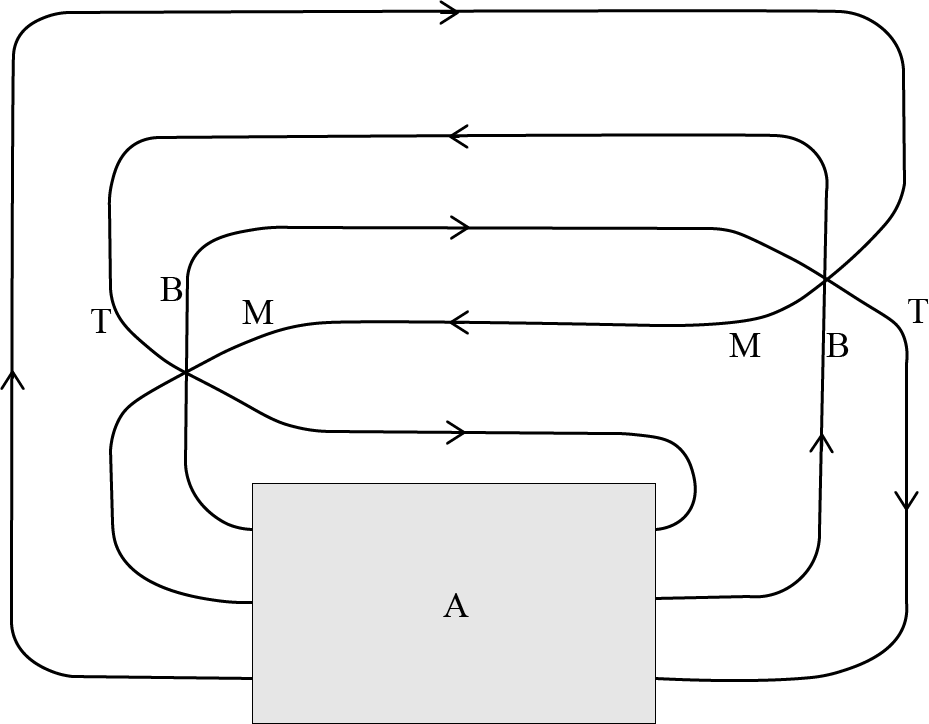}
	\caption{Basic structure which will be used to construct the infinite family of knots.}
	\label{fig:12more}
\end{figure}

In other words, we replace the $A$ portion with a sequence of pairs of triple crossings in Fig.\ref{fig:12tbmbtm}.

\begin{figure}[!ht]
	\centering
	\includegraphics[scale=0.2]{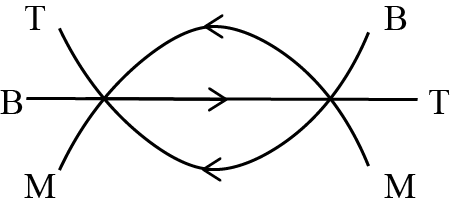}
	\caption{A pair of triple crossings that will be repeated in the $A$ portion of Fig.~\ref{fig:12more}.}
	\label{fig:12tbmbtm}
\end{figure}

We determine the braid indices using the HOMFLY polynomial bound. The HOMFLY polynomial is given by the skein relation
\[
v^{-1} P\bigg( \raisebox{-8pt}{\includegraphics[scale=0.15]{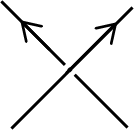}} \bigg)
- v P\bigg( \raisebox{-8pt}{\includegraphics[scale=0.15]{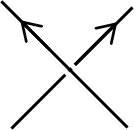}} \bigg)
= z P\bigg( \raisebox{-10pt}{\includegraphics[scale=0.15]{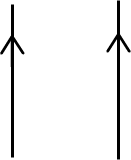}} \bigg),
\]
along with the condition that the polynomial for the unknot is 1. Morton, Franks and Williams proved the following bound on braid indices~\cite{franks williams}~\cite{morton}:
\[
\beta_2(L) \ge \frac{v\text{-span of } P(L) }{2} + 1.
\]

We have the following result.

\begin{theorem}
Let $L$ be the knot obtained by replacing the $A$ portion of Fig.~\ref{fig:12more} with $n$ pairs of triple crossings in Fig.~\ref{fig:12tbmbtm}. Then $\beta_2(L) = 2n+3$, and $c_3(L) = 2n+2$.
\end{theorem}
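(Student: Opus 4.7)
The plan is to sandwich $\beta_2(L_n)$ and $c_3(L_n)$ between matching upper and lower bounds. The upper bounds are essentially free from the construction: the explicit triple-crossing projection of $L_n$ uses the two fixed triple crossings outside the $A$ portion together with $2n$ triple crossings inside, for a total of $2n+2$. Hence $c_3(L_n)\le 2n+2$, and Theorem~\ref{thm:braidIndexBound} gives $\beta_2(L_n)\le c_3(L_n)+1\le 2n+3$.

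The harder half is a matching lower bound on $\beta_2(L_n)$, which I would obtain from the Morton-Franks-Williams inequality recalled in the excerpt. This reduces the problem to verifying that the $v$-span of the HOMFLY polynomial $P(L_n)$ is at least $4n+4$. My approach is to first turn the diagram into a double-crossing diagram $D_n$ by resolving each triple crossing into three consecutive double crossings, and then to set up a skein recurrence by applying the skein relation to one carefully chosen double crossing inside the $n$-th inserted pair of triple crossings. I expect this to produce a two-term relation
\[
P(L_n) \;=\; f(v)\,P(L_{n-1}) \;+\; g(v,z)\,P(M_{n-1}),
\]
where $M_{n-1}$ is the link obtained from the horizontal smoothing at the chosen crossing. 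A second skein step at the partner crossing inside the same pair should either close the recursion in $P(L_n)$ alone or express $M_{n-1}$ in terms of a simpler knot whose HOMFLY polynomial is accessible (plausibly a torus knot or a connect sum with a Hopf link, depending on what the pair in Fig.~\ref{fig:12tbmbtm} reduces to after smoothing).

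The main obstacle will be showing that the leading and trailing $v$-coefficients generated by this recurrence do not cancel, so that the $v$-span actually grows by exactly $4$ with each additional pair. I would handle this by computing $P$ of the base case (the $n=1$ knot of Fig.~\ref{fig:12knot}, or the analogous $n=0$ diagram) by hand and exhibiting explicit nonzero top and bottom $v$-coefficients, then proving inductively that the recurrence shifts the highest power of $v$ by $+2$ and the lowest by $-2$ while preserving nonvanishing of these extremal coefficients. Once the inductive bound $v\text{-span}(P(L_n))\ge 4n+4$ is established, the MFW inequality yields $\beta_2(L_n)\ge \tfrac{1}{2}(4n+4)+1 = 2n+3$, matching the upper bound.

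Combining both sides gives $\beta_2(L_n)=2n+3$. Feeding this equality back into Theorem~\ref{thm:braidIndexBound} gives $c_3(L_n)\ge \beta_2(L_n)-1 = 2n+2$, which together with the diagrammatic upper bound forces $c_3(L_n)=2n+2$, completing both equalities at once. The entire argument is a single HOMFLY computation powered by the new bound from Section~\ref{sec:proof}; the delicate point is only the inductive control of the $v$-extremes, for which the nonvanishing of the leading coefficient in the base case is what must really be checked.
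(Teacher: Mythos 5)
Your proposal follows essentially the same route as the paper: the upper bounds $c_3(L)\le 2n+2$ and $\beta_2(L)\le 2n+3$ come from the explicit diagram together with Theorem~\ref{thm:braidIndexBound}, and the matching lower bound $\beta_2(L)\ge 2n+3$ comes from the Morton--Franks--Williams inequality, established by a skein-relation induction showing the $v$-span of the HOMFLY polynomial equals $4n+4$, after which both equalities follow by the same sandwich you describe. The only differences are in execution: the paper's recursion does not close as a two-term relation in $P(L_n)$ alone but requires a coupled system of recurrences over four distinct tangle closures (several auxiliary closures stabilize because an inserted pair of triple crossings can be undone when the top and middle strands of the right crossing are joined), its base cases and the $P_0$ values are verified by computer rather than by hand, and the span grows by $+4$ per pair entirely on the top power with the bottom fixed at $v^{-4}$, rather than symmetrically by $\pm 2$ as you anticipated.
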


\begin{proof}

Since we are only interested in the $v$-span of the HOMFLY polynomial, we consider equivalence classes of HOMFLY polynomials given by their span. Specifically, we say $P(L_1) = P(L_2)$ if the $v$-span of the two polynomials are the same.

For this example, the highest and the lowest powers of $v$ only appear once, so there is no danger of cancelling. Therefore we do not keep track of the signs of powers of $v$, or any powers of $z$. Then the skein relation can be rewritten as follows:
\begin{align*}
P\bigg( \raisebox{-8pt}{\includegraphics[scale=0.15]{12skeinA.png}} \bigg)
&= v^2 P\bigg( \raisebox{-8pt}{\includegraphics[scale=0.15]{12skeinB.png}} \bigg)
+ v P\bigg( \raisebox{-10pt}{\includegraphics[scale=0.15]{12skeinC.png}} \bigg), \\
P\bigg( \raisebox{-8pt}{\includegraphics[scale=0.15]{12skeinB.png}} \bigg)
&= v^{-2} P\bigg( \raisebox{-8pt}{\includegraphics[scale=0.15]{12skeinA.png}} \bigg)
+ v^{-1} P\bigg( \raisebox{-10pt}{\includegraphics[scale=0.15]{12skeinC.png}} \bigg). \\
\end{align*}

Using these relations, we have the following.

\begin{align*}
&P\bigg( \raisebox{-15pt}{\includegraphics[scale=0.15]{12tbmbtm.png}} \bigg)
= v^2 P\bigg( \raisebox{-15pt}{\includegraphics[scale=0.15]{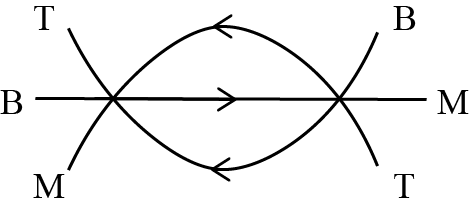}} \bigg)
+ v P\bigg( \raisebox{-15pt}{\includegraphics[scale=0.15]{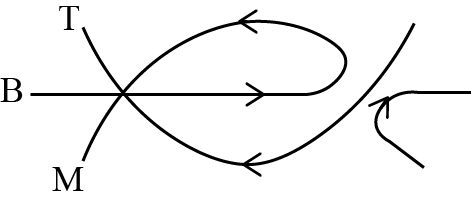}} \bigg) \\
&= v^2 P\bigg( \raisebox{-15pt}{\includegraphics[scale=0.15]{12tbmbmt.png}} \bigg)
+ v P\bigg( \raisebox{-12pt}{\includegraphics[scale=0.15]{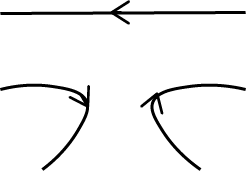}} \bigg) \\
&= v^2 P\bigg( \raisebox{-12pt}{\includegraphics[scale=0.15]{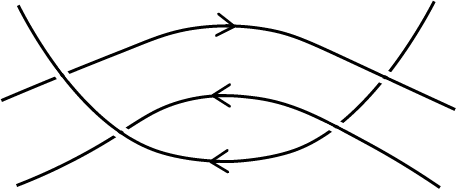}} \bigg)
+ v P\bigg( \raisebox{-12pt}{\includegraphics[scale=0.15]{12lowlow.png}} \bigg) \\
&= v^2 \Bigg( v^2 P\bigg( \raisebox{-12pt}{\includegraphics[scale=0.15]{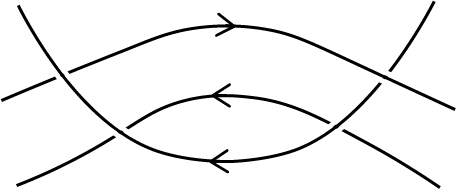}} \bigg)
+ v  P\bigg( \raisebox{-12pt}{\includegraphics[scale=0.15]{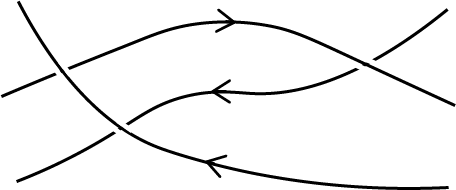}} \bigg) \Bigg)
+ v P\bigg( \raisebox{-12pt}{\includegraphics[scale=0.15]{12lowlow.png}} \bigg) \\
&= v^4 P\bigg( \raisebox{-10pt}{\includegraphics[scale=0.15]{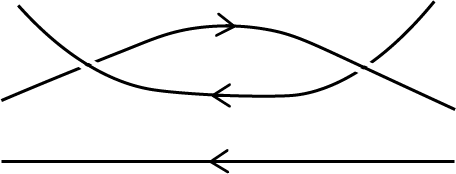}} \bigg)
+ v^3  P\bigg( \raisebox{-15pt}{\includegraphics[scale=0.15]{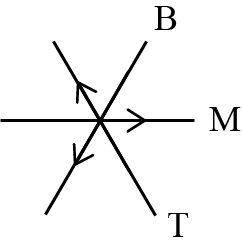}} \bigg)
+ v P\bigg( \raisebox{-12pt}{\includegraphics[scale=0.15]{12lowlow.png}} \bigg) \\
&= v^4 \Bigg( v^{-2} P\bigg( \raisebox{-10pt}{\includegraphics[scale=0.15]{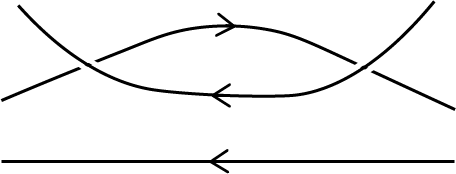}} \bigg)
+ v^{-1} P\bigg( \raisebox{-10pt}{\includegraphics[scale=0.15]{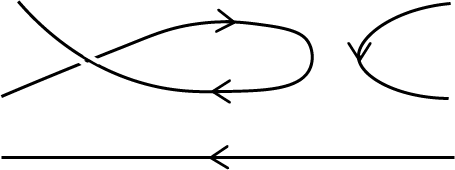}} \bigg) \Bigg)
+ v^3  P\bigg( \raisebox{-15pt}{\includegraphics[scale=0.15]{12bmt.png}} \bigg)
+ v P\bigg( \raisebox{-12pt}{\includegraphics[scale=0.15]{12lowlow.png}} \bigg) \\
&= v P\bigg( \raisebox{-12pt}{\includegraphics[scale=0.15]{12lowlow.png}} \bigg)
+ v^3  P\bigg( \raisebox{-15pt}{\includegraphics[scale=0.15]{12bmt.png}} \bigg)
+ v^3 P\bigg( \raisebox{-10pt}{\includegraphics[scale=0.13]{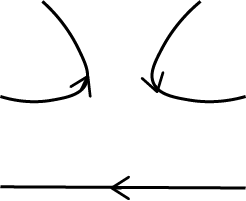}} \bigg)
+ v^2 P\bigg( \raisebox{-8pt}{\includegraphics[scale=0.15]{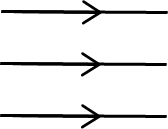}} \bigg)
\end{align*}

We can also see that adding a trivial component is a multiplication by $(v+v^{-1})$:

\begin{align*}
P\bigg( \raisebox{-8pt}{\includegraphics[scale=0.1]{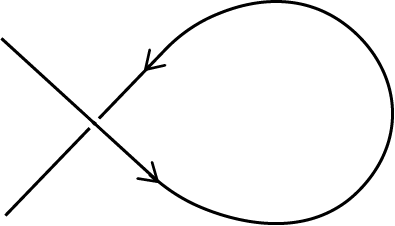}} \bigg)
&= v^2 P\bigg( \raisebox{-8pt}{\includegraphics[scale=0.1]{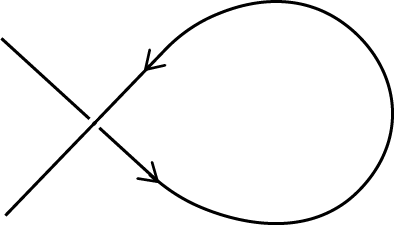}} \bigg)
+ v P\bigg( \raisebox{-9pt}{\includegraphics[scale=0.1]{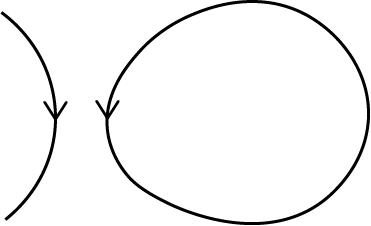}} \bigg) \\
P\bigg( \raisebox{-9pt}{\includegraphics[scale=0.1]{12unlinkC.png}} \bigg)
&= (v + v^{-1}) P\bigg( \hspace{3mm} \raisebox{-8pt}{\includegraphics[scale=0.1]{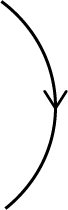}} \hspace{3mm} \bigg).
\end{align*}

For a diagram $T$, let $P_i(T)$ be the equivalence class of the HOMFLY polynomial for the link given by replacing the $A$ portion of Fig.~\ref{fig:12more} with $i$ triple crossings (on the left) and $T$ (on the right).

Using these relations, we can derive the following.

\begin{align*}
&P_{i+1}\bigg( \raisebox{-10pt}{\includegraphics[scale=0.13]{12highhigh.png}} \bigg) 
= P_i\bigg( \raisebox{-15pt}{\includegraphics[scale=0.14]{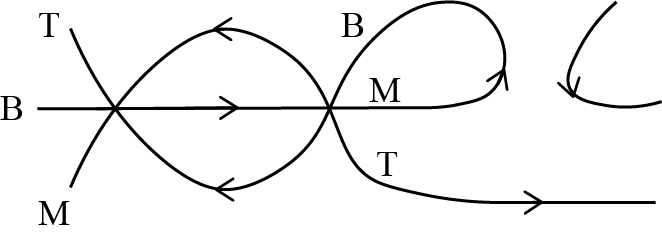}} \bigg) \\
&= v P_i\bigg( \raisebox{-12pt}{\includegraphics[scale=0.15]{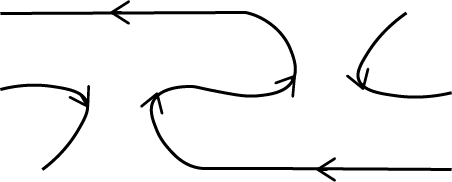}} \bigg)
+ v^3 P_i\bigg( \raisebox{-15pt}{\includegraphics[scale=0.15]{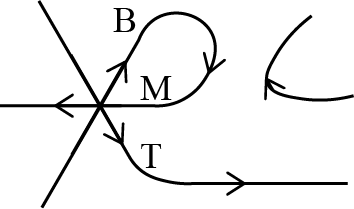}} \bigg)
+ v^3 P_i\bigg( \raisebox{-10pt}{\includegraphics[scale=0.15]{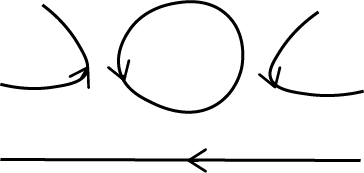}} \bigg)
+ v^2 P_i\bigg( \raisebox{-10pt}{\includegraphics[scale=0.13]{12highhigh.png}} \bigg) \\
&= v P_i\bigg( \raisebox{-12pt}{\includegraphics[scale=0.12]{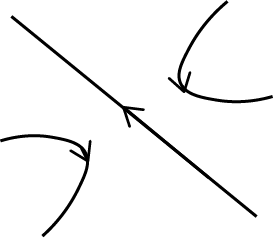}} \bigg)
+ v^3 P_i\bigg( \raisebox{-12pt}{\includegraphics[scale=0.12]{12lowhigh.png}} \bigg)
+ v^3 P_i\bigg( \raisebox{-10pt}{\includegraphics[scale=0.15]{12highhighhighhigh.png}} \bigg)
+ v^2 P_i\bigg( \raisebox{-10pt}{\includegraphics[scale=0.13]{12highhigh.png}} \bigg). \\
&= (v + v^3) P_i\bigg( \raisebox{-12pt}{\includegraphics[scale=0.12]{12lowhigh.png}} \bigg)
+ (v^2 + v^4) P_i\bigg( \raisebox{-10pt}{\includegraphics[scale=0.13]{12highhigh.png}} \bigg) \\
&P_{i+1}\bigg( \raisebox{-12pt}{\includegraphics[scale=0.12]{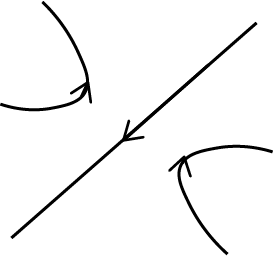}} \bigg)
= P_i\bigg( \raisebox{-15pt}{\includegraphics[scale=0.15]{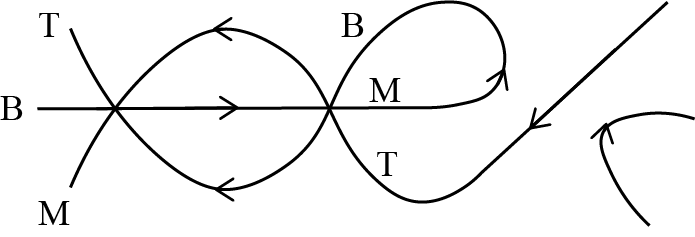}} \bigg) \\
&= v P_i\bigg( \raisebox{-12pt}{\includegraphics[scale=0.15]{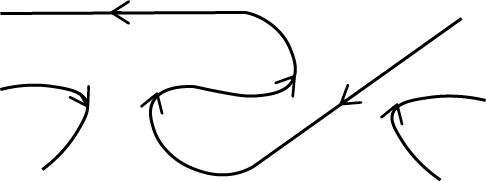}} \bigg)
+ v^3 P_i\bigg( \raisebox{-15pt}{\includegraphics[scale=0.15]{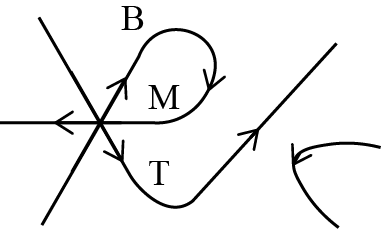}} \bigg)
+ v^3 P_i\bigg( \raisebox{-12pt}{\includegraphics[scale=0.12]{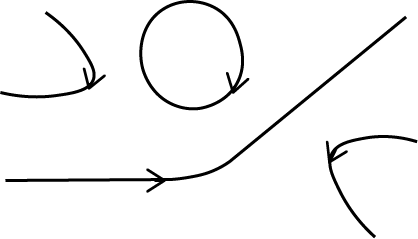}} \bigg)
+ v^2 P_i\bigg( \raisebox{-12pt}{\includegraphics[scale=0.12]{12highlow.png}} \bigg) \\
&= v P_i\bigg( \raisebox{-12pt}{\includegraphics[scale=0.15]{12lowlow.png}} \bigg)
+ v^3 P_i\bigg( \raisebox{-12pt}{\includegraphics[scale=0.15]{12lowlow.png}} \bigg)
+ v^3 P_i\bigg( \raisebox{-12pt}{\includegraphics[scale=0.12]{12highhighhighlow.png}} \bigg)
+ v^2 P_i\bigg( \raisebox{-12pt}{\includegraphics[scale=0.12]{12highlow.png}} \bigg). \\
&= (v+v^3) P_i\bigg( \raisebox{-12pt}{\includegraphics[scale=0.15]{12lowlow.png}} \bigg)
+ (v^2 + v^4) P_i\bigg( \raisebox{-12pt}{\includegraphics[scale=0.12]{12highlow.png}} \bigg). \\
&P_{i+1}\bigg( \raisebox{-15pt}{\includegraphics[scale=0.15]{12bmt.png}} \bigg)
= P_i\bigg( \raisebox{-15pt}{\includegraphics[scale=0.15]{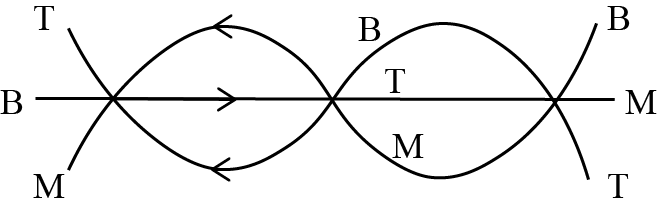}} \bigg) \\
&= v P_i\bigg( \raisebox{-15pt}{\includegraphics[scale=0.12]{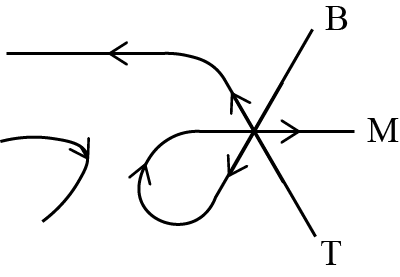}} \bigg)
+ v^3 P_i\bigg( \raisebox{-15pt}{\includegraphics[scale=0.15]{12tbmbmt.png}} \bigg)
+ v^3 P_i\bigg( \raisebox{-15pt}{\includegraphics[scale=0.12]{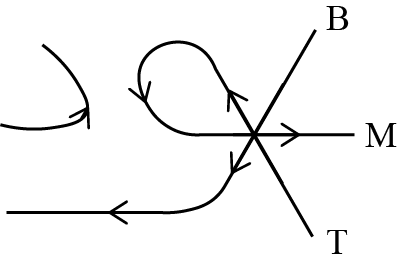}} \bigg)
+ v^2 P_i\bigg( \raisebox{-15pt}{\includegraphics[scale=0.15]{12bmt.png}} \bigg) \\
&= v P_i\bigg( \raisebox{-12pt}{\includegraphics[scale=0.12]{12lowhigh.png}} \bigg)
+ v^3 P_i\bigg( \raisebox{-15pt}{\includegraphics[scale=0.15]{12tbmbmt.png}} \bigg)
+ v^3 P_i\bigg( \raisebox{-12pt}{\includegraphics[scale=0.12]{12highlow.png}} \bigg)
+ v^2 P_i\bigg( \raisebox{-15pt}{\includegraphics[scale=0.15]{12bmt.png}} \bigg).
\end{align*}

In addition,

\begin{align*}
&P_i\bigg( \raisebox{-15pt}{\includegraphics[scale=0.15]{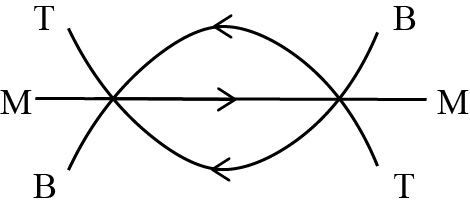}} \bigg)
= v^{-2} P_i\bigg( \raisebox{-15pt}{\includegraphics[scale=0.15]{12tbmbmt.png}} \bigg)
+ v^{-1} P_i\bigg( \raisebox{-15pt}{\includegraphics[scale=0.15]{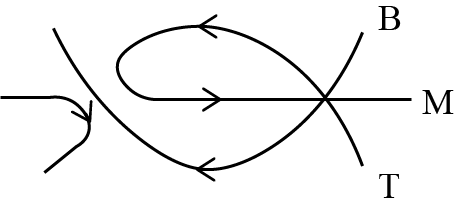}} \bigg) \\
&= v^{-2} \Bigg( v^{-2} P_i\bigg( \raisebox{-15pt}{\includegraphics[scale=0.15]{12tbmbtm.png}} \bigg)
+ v^{-1} P \bigg( \raisebox{-15pt}{\includegraphics[scale=0.15]{12tbmbtm2.png}} \bigg) \Bigg)
+ v^{-1} P_i\bigg( \raisebox{-12pt}{\includegraphics[scale=0.15]{12lowlow.png}} \bigg) \\
&= v^{-4} P_i\bigg( \raisebox{-15pt}{\includegraphics[scale=0.15]{12tbmbtm.png}} \bigg)
+ (v^{-3} + v^{-1}) P_i\bigg( \raisebox{-12pt}{\includegraphics[scale=0.15]{12lowlow.png}} \bigg).
\end{align*}

So we have

\begin{align*}
&P_{i+1}\bigg( \raisebox{-15pt}{\includegraphics[scale=0.15]{12bmt.png}} \bigg)
= v P_i\bigg( \raisebox{-12pt}{\includegraphics[scale=0.12]{12lowhigh.png}} \bigg)
+ v^{-1} P_i\bigg( \raisebox{-15pt}{\includegraphics[scale=0.15]{12tbmbtm.png}} \bigg) \\
&\hspace{5mm}+ (1 + v^2) P_i\bigg( \raisebox{-12pt}{\includegraphics[scale=0.15]{12lowlow.png}} \bigg)
+ v^3 P_i\bigg( \raisebox{-12pt}{\includegraphics[scale=0.12]{12highlow.png}} \bigg)
+ v^2 P_i\bigg( \raisebox{-15pt}{\includegraphics[scale=0.15]{12bmt.png}} \bigg).
\end{align*}

Consider the pair of triple crossings in Fig.~\ref{fig:12tbmbtm}. If we join the top and the middle strands of the crossing on the right and create a loop, then the loop can be pulled over the crossing to remove the crossing on this crossing. Observe that the loop now connects the bottom and the middle strands of the left crossing. Then the loop can be pulled under this crossing to remove it. In other words, the pair of triple crossings can be removed if we connect the top and the middle strands of the crossing on the right. Thus we can see that 
\begin{align*}
&P_i\bigg( \raisebox{-12pt}{\includegraphics[scale=0.15]{12lowlow.png}} \bigg)
= P_0\bigg( \raisebox{-12pt}{\includegraphics[scale=0.15]{12lowlow.png}} \bigg)
= v^{-5} + v^{-1}, \\
&P_i\bigg( \raisebox{-12pt}{\includegraphics[scale=0.12]{12lowhigh.png}} \bigg)
= P_0\bigg( \raisebox{-12pt}{\includegraphics[scale=0.12]{12lowhigh.png}} \bigg)
= v^{-4} + 1.
\end{align*}
The $P_0$ polynomials have been calculated by a program.

We claim that the following is true, and prove this by induction.

\begin{align*}
P_n\bigg( \raisebox{-10pt}{\includegraphics[scale=0.13]{12highhigh.png}} \bigg)
&= \begin{cases}
v^{-3} + v^{4n-3} &\text{ if } n \ge 2; \\
v^{-1} + v &\text{ if } n = 1.
\end{cases}\\
P_n\bigg( \raisebox{-12pt}{\includegraphics[scale=0.12]{12highlow.png}} \bigg)
&= \begin{cases}
v^{-4} + v^{4n-4} &\text{ if } n \ge 2; \\
1 &\text{ if } n = 1.
\end{cases}\\
P_n\bigg( \raisebox{-15pt}{\includegraphics[scale=0.15]{12bmt.png}} \bigg)
&= v^{-5} + v^{4n-5} \\
P_n\bigg( \raisebox{-15pt}{\includegraphics[scale=0.15]{12tbmbtm.png}} \bigg)
&=v^{-4} + v^{4n}.
\end{align*}

The cases for $n=1,2$ can be verified by a program. For the inductive step, suppose the equations for $n=i$ are true, where $i \ge 2$. Then we have

\begin{align*}
P_{i+1}\bigg( \raisebox{-10pt}{\includegraphics[scale=0.13]{12highhigh.png}} \bigg)
&= (v + v^3) P_i\bigg( \raisebox{-12pt}{\includegraphics[scale=0.12]{12lowhigh.png}} \bigg)
+ (v^2 + v^4) P_i\bigg( \raisebox{-10pt}{\includegraphics[scale=0.13]{12highhigh.png}} \bigg) \\
&= (v+v^3)(v^{-4}+1) + (v^2 + v^4)(v^{-3} + v^{4i-3}) \\
&= v^{-3} + v^{4(i+1)-3}. \\
P_{i+1}\bigg( \raisebox{-12pt}{\includegraphics[scale=0.12]{12highlow.png}} \bigg)
&= (v+v^3) P_i\bigg( \raisebox{-12pt}{\includegraphics[scale=0.15]{12lowlow.png}} \bigg)
+ (v^2 + v^4) P_i\bigg( \raisebox{-12pt}{\includegraphics[scale=0.12]{12highlow.png}} \bigg). \\
&= (v + v^3)(v^{-5} + v^{-1}) + (v^2 + v^4)(v^{-4} + v^{4i-4}) \\
&= v^{-4} + v^{4(i+1) - 4}. \\
P_{i+1}\bigg( \raisebox{-15pt}{\includegraphics[scale=0.15]{12bmt.png}} \bigg)
&= v P_i\bigg( \raisebox{-12pt}{\includegraphics[scale=0.12]{12lowhigh.png}} \bigg)
+ v^{-1} P_i\bigg( \raisebox{-15pt}{\includegraphics[scale=0.15]{12tbmbtm.png}} \bigg)
+ (1 + v^2) P_i\bigg( \raisebox{-12pt}{\includegraphics[scale=0.15]{12lowlow.png}} \bigg) \\
&\hspace{5mm}+ v^3 P_i\bigg( \raisebox{-12pt}{\includegraphics[scale=0.12]{12highlow.png}} \bigg)
+ v^2 P_i\bigg( \raisebox{-15pt}{\includegraphics[scale=0.15]{12bmt.png}} \bigg) \\
&= v(v^{-4} + 1) + v^{-1}(v^{-4} + v^{4i}) + (1+v^2)(v^{-5} + v^{-1}) \\
&\hspace{5mm}+ v^3(v^{-4} + v^{4i-4}) + v^2(v^{-5} + v^{4i-5}) \\
&= v^{-5} + v^{4(i+1)-5}. \\
P_{i+1}\bigg( \raisebox{-15pt}{\includegraphics[scale=0.15]{12tbmbtm.png}} \bigg)
&= v P_{i+1}\bigg( \raisebox{-12pt}{\includegraphics[scale=0.15]{12lowlow.png}} \bigg)
+ v^3 P_{i+1}\bigg( \raisebox{-15pt}{\includegraphics[scale=0.15]{12bmt.png}} \bigg) \\
&\hspace{5mm}+ v^3 P_{i+1}\bigg( \raisebox{-10pt}{\includegraphics[scale=0.13]{12highhigh.png}} \bigg)
+ v^2 P_{i+1}\bigg( \raisebox{-8pt}{\includegraphics[scale=0.15]{12none.png}} \bigg) \\
&= v P_{i+1}\bigg( \raisebox{-12pt}{\includegraphics[scale=0.15]{12lowlow.png}} \bigg)
+ v^3 P_{i+1}\bigg( \raisebox{-15pt}{\includegraphics[scale=0.15]{12bmt.png}} \bigg) \\
&\hspace{5mm}+ v^3 P_{i+1}\bigg( \raisebox{-10pt}{\includegraphics[scale=0.13]{12highhigh.png}} \bigg)
+ v^2 P_i\bigg( \raisebox{-15pt}{\includegraphics[scale=0.15]{12tbmbtm.png}} \bigg) \\
&= v(v^{-5} + v^{-1}) + v^3(v^{-5} + v^{4(i+1)-5}) + v^3(v^{-3} + v^{4(i+1)-3}) + v^2(v^{-4} + v^{4i}) \\
&= v^{-4} + v^{4(i+1)}.
\end{align*}

This proves the claim. Therefore, we have $P(L) = v^{-4} + v^{4n}$, so the $v$-span of $L$ is $4n+4$. We know that $L$ has a triple crossing projection with $2n+2$ crossings. This means that
\[
\frac{4n+4}{2} + 1 = \frac{v\text{-span of } P(L) }{2} + 1 \le \beta_2(L) \le c_3(L) + 1 \le (2n+2) + 1,
\]
which shows that $\beta_2(L) = c_3(L) + 1 = 2n+3$.
\end{proof}

We expect that these projections of this family of knots realize the minimal double crossing number, meaning this would be an example of an infinite family of knots such that $c_2(L) = 3c_3(L)$. Unfortunately, we do not have a means to prove that $c_2(L)$ cannot be lower.

\FloatBarrier 

\end{document}